\documentclass[reqno,12pt]{amsart}
\usepackage{amscd,amsfonts,mathrsfs,amsthm,enumerate}
\usepackage{amssymb, amsmath}
\usepackage{stmaryrd}
\usepackage{epsfig}
\usepackage[alphabetic, sorted-cites]{amsrefs}

\setcounter{MaxMatrixCols}{10}

\subjclass[2000]{Primary 53C44, 53C42, 57R52, 35K55}
\keywords{Mean curvature flow, homotopy,  area decreasing, graphs, maximum principle}
\thanks{The first author is supported by the grant of $\text{E}\Sigma\Pi\text{A}:$ PE1-417.}

\parindent = 0 mm
\hfuzz     = 6 pt
\parskip   = 3 mm

\def\real     #1{{\mathbb R^{#1}}}

\def\dt       {\partial_{t}}

\def\equationcolor {\color{black}}
\def\textcolor     {\color{black}}

\def\bcoleq    {\begin{equation}\equationcolor}
\def\ecoleq    {\textcolor\end{equation}}
\def\bcoleqn   {\equationcolor\begin{eqnarray}}
\def\ecoleqn   {\end{eqnarray}\textcolor}

\def\gm{{\operatorname{g}_M}}
\def\gn{{\operatorname{g}_N}}

\def\gk{{\operatorname{g}_{M\times N}}}
\def\rm{{\operatorname{R}_M}}
\def\rn{{\operatorname{R}_N}}
\def\rk{{\operatorname{R}_{M\times N}}}
\def\sk{{\operatorname{s}_{M\times N}}}
\def\rind{\operatorname{R}}

\def\sind{\operatorname{s}}

\def\P{\operatorname{P}}
\def\T{\operatorname{T}}
\def\Q{\operatorname{Q}}

\def\dF{\operatorname{d}\hspace{-3pt}F}
\def\df{\operatorname{d}\hspace{-3pt}f}

\def\gind{\operatorname{g}}
\def\Gind{\operatorname{G}}

\def\sym{\operatorname{Sym}}

\DeclareMathOperator*{\Ric}{Ric}

\DeclareMathOperator*{\trace}{trace}
\DeclareMathOperator*{\rank}{rank}

\newtheorem{theorem}{Theorem}[section]
\newtheorem{mythm}{Theorem}

\newtheorem{lemma}[theorem]{Lemma}

\newtheorem{proposition}[theorem]{Proposition}

\theoremstyle{definition}
\newtheorem{remark}[theorem]{Remark}

\newcommand{\bfig}{\begin{figure}}
\newcommand{\efig}{\end{figure}}

\makeatletter
\def\pproof#1{\@ifnextchar[\opargproof
{\opargproof[\it Proof of #1.]}}
\def\opargproof[#1]{\par\noindent {\bf #1 }}

\makeatother

\numberwithin{equation}{section}

\begin{document}

\title[Mean Curvature Flow]{Homotopy of area decreasing maps by mean curvature flow}
\author[Andreas Savas-Halilaj]{\textsc{Andreas Savas-Halilaj}}
\author[Knut Smoczyk]{\textsc{Knut Smoczyk}}
\address{Andreas Savas-Halilaj\newline
Institut f\"ur Differentialgeometrie\newline
Leibniz Universit\"at Hannover\newline
Welfengarten 1\newline
30167 Hannover, Germany\newline
{\sl E-mail address:} {\bf savasha@math.uni-hannover.de}
}
\address{Knut Smoczyk\newline
Institut f\"ur Differentialgeometrie\newline
and Riemann Center for Geometry and Physics\newline
Leibniz Universit\"at Hannover\newline
Welfengarten 1\newline
30167 Hannover, Germany\newline
{\sl E-mail address:} {\bf smoczyk@math.uni-hannover.de}
}

\date{}

\begin{abstract}
Let $f:M\to N$ be a smooth area decreasing map between two Riemannian
manifolds $(M,\gm)$ and $(N,\gn)$. Under weak and natural assumptions on the
curvatures of $(M,\gm)$ and $(N, \gn)$, we prove that the mean curvature flow
provides a smooth homotopy of $f$ to a constant map.
\end{abstract}

\maketitle

\section{Introduction}
Given a continuous map $f:M\to N$ between two smooth manifolds $M$ and $N$, it is
an interesting problem to find canonical representatives in the homotopy class of
$f$. One possible approach is the \textit{harmonic map heat flow}
that was defined by Eells and Sampson in \cite{eells}. Provided that $M$ and $N$ both carry
appropriate Riemannian metrics, they proved long-time existence and convergence of the heat flow,
showing that under these  assumptions one finds harmonic representatives in a given homotopy
class. This approach is applicable usually when the target space is negatively curved. However,
in general  one can neither expect long-time existence nor convergence of the flow, in particular
for maps between spheres, since the flow usually develops singularities.

Another way to deform a smooth map $f:M\to N$ between Riemannian manifolds $(M,\gm)$ and $(N,\gn)$
is by deforming its corresponding graph
$$\Gamma(f):=\{(x,f(x))\in M\times N:x\in M\},$$
in the product space $M\times N$ via the \textit{mean curvature flow}. A graphical solution of the
mean curvature flow can be described completely in terms of a smooth family of maps $f_t:M\to N$, $t\in[0,T)$,
$f_0=f$, where $0<T\le\infty$ is the maximal time for which the smooth graphical solution exists.

In case of long-time existence of a graphical solution and convergence we would thus obtain a smooth
homotopy from $f$ to a \textit{minimal map} $f_\infty:M\to N$ as time $t$ tends to infinity. Recall
that a map is called minimal, if and only if its graph is a minimal submanifold of $M\times N$.

The first result in this direction is due to Ecker and Huisken \cite{ecker}.
They proved long-time existence of entire graphical hypersurfaces in $\real{n+1}$.
Moreover, they proved convergence to a flat subspace, if the growth rate at infinity
of the initial graph is linear. The crucial observation in their paper was that the
scalar product of the unit normal with a height vector satisfies a nice evolution equation
that can be used to bound the second fundamental form appropriately.

The complexity of the normal bundle in higher codimensions makes the situation much more complicated.
Results analogous to that of Ecker and Huisken are not available any more without further assumptions.
However, the ideas developed in the paper of Ecker and Huisken opened a new era for the study of the
mean curvature flow of submanifolds in Riemannian manifolds of arbitrary dimension and codimension (see
for example \cite{wang1}, \cite{wang2},
\cite{wang}, \cite{chen-li-tian}, \cite{smoczyk4}, \cite{smoczyk3}, \cite{tsui}, 
\cite{smoczyk2}, \cite{medos}, \cite{lee}, \cite{chau-chen-he1},
\cite{chau-chen-he2} and the references therein).

A map $f:M\to N$ is called
\textit{weakly length decreasing} if $f^{\ast}\gn\le\gm$ and \textit{strictly length decreasing},
if $f^{\ast}\gn<\gm.$ Hence a length decreasing map has the property
that its differential shortens the lengths of tangent vectors. A smooth map $f:M\to N$ is called
\textit{weakly area decreasing} if its
differential $\df$ decreases the area of two dimensional tangent planes, that is if
$$\|\df(v)\wedge\df(w)\|_{\gn}\le\|v\wedge w\|_{\gm},$$
for all $v,w\in TM$. If the differential $\df$
is strictly decreasing the area of two dimensional tangent planes, then $f$ is called \textit{strictly
area decreasing}. Analogously, we may introduce the notion of \textit{weakly} and \textit{strictly
k-volume decreasing maps}.

In \cite{wang,tsui}, Wang and Tsui studied deformations of smooth maps $f:M\to N$
between Riemannian manifolds under the mean curvature flow. Under the assumption that the initial map is
strictly area decreasing, $M$ and $N$ are compact space forms with $\dim M\ge 2$, whose corresponding
sectional curvatures $\sec_{M}$ and $\sec_{N}$ satisfy
\begin{equation*}\label{curvcond}
\sec_{M}\ge|\sec_{N}|,\quad \sec_M+\sec_N>0,
\end{equation*}
they proved long-time existence of the mean curvature flow of the graph and convergence of $f$
to a constant map. Recently, Lee and Lee \cite{lee} generalized the result of Wang
and Tsui by showing that the same result holds true provided that $M$ and $N$ are compact Riemannian
manifolds whose sectional curvatures are bounded by
\begin{equation*}\label{curvcond2}
\sec_{M}\ge\sigma\ge\sec_{N}
\end{equation*}
for some strictly positive number $\sigma>0$.

The goal of this paper is to show that the curvature assumptions can be relaxed
even much further. In particular, we show that the deformation of area decreasing
maps under its mean curvature gives the following result:

\begin{mythm}\label{thmA}
Let $M$ and $N$ be two compact Riemannian manifolds. Assume that $m=\dim M\ge 2$
and that there exists a positive constant $\sigma$ such that the sectional curvatures $\sec_{M}$
of $M$ and $\sec_{N}$ of $N$ and the Ricci curvature $\Ric_{M}$ of $M$ satisfy
\begin{equation*}\label{curvcond3}
\sec_{M}>-\sigma,\,\,\,\,{\Ric}_{M}\ge(m-1)\sigma\ge(m-1)\sec_{N}.
\end{equation*}
If $f:M\to N$ is a strictly area decreasing smooth map, then the mean curvature flow of the graph
of $f$ remains the graph of a strictly area decreasing map and exists for all time. Moreover, under
the mean curvature flow the area decreasing map converges to a constant map.
\end{mythm}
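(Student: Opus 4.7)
The plan is to apply the mean curvature flow to the graph $\Gamma(f)\hookrightarrow M\times N$ with its product metric, show that the flow preserves both graphicality and the strict area decreasing property, deduce long-time existence from the resulting a priori estimates, and analyze the $t\to\infty$ limit. As long as graphicality persists, the evolution is encoded by a smooth family $f_t:M\to N$ with $f_0=f$, and the three issues to settle are preservation, long-time existence, and convergence.

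For the preservation step I would introduce a canonical symmetric $2$-tensor $\Phi$ on $\Lambda^{2}T\Gamma(f_t)$ built from the splitting of the ambient product metric, designed so that its positivity at a point is equivalent to the strict area decreasing inequality $\lambda_1\lambda_2<1$, where $\lambda_1\ge\lambda_2\ge\cdots\ge 0$ are the singular values of $df_t$ (equivalently, the tangents of the Jordan angles between $T_p\Gamma(f_t)$ and the horizontal subspace). Computing the evolution under MCF produces a parabolic equation
\[
(\partial_t-\Delta)\Phi \;=\; \mathfrak{R}_A + \mathfrak{R}_{\mathrm{Rm}},
\]
where $\mathfrak{R}_A$ is quadratic in the second fundamental form and $\mathfrak{R}_{\mathrm{Rm}}$ is linear in the ambient curvature, which splits as $\mathrm{Rm}_M\oplus\mathrm{Rm}_N$. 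Hamilton's tensor maximum principle then reduces matters to a pointwise inequality: at every null-eigenvector $v$ of $\Phi$ one must verify $\mathfrak{R}_A(v,v)+\mathfrak{R}_{\mathrm{Rm}}(v,v)\ge 0$. At such a critical configuration one has $\lambda_1\lambda_2=1$, which collapses the algebra to a small number of indices in the Jordan frame and lets one hope to dominate the signed $A$-terms by completing squares once the curvature contribution is accounted for.

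Once preservation is established, $(d\pi_M)|_{\Gamma_t}$ is uniformly invertible, so the flow remains graphical with $|df_t|$ uniformly bounded; standard interior parabolic estimates (Ecker--Huisken-type gradient bounds, or White's $\varepsilon$-regularity via Huisken's monotonicity) then yield uniform control of the second fundamental form and of all higher derivatives, hence long-time existence. For convergence, any subsequential smooth limit as $t\to\infty$ is a minimal graph of a strictly area decreasing map; the hypothesis $\Ric_M\ge (m-1)\sigma\ge (m-1)\sec_N$ with $\sigma>0$ excludes every non-constant such limit via a Bochner/Weitzenb\"ock identity in the spirit of Eells--Sampson--Schoen, forcing the limiting map to be constant. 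Subsequential convergence upgrades to full convergence either from monotonicity of graph area (bounded below by $|M|$) together with uniqueness of the limiting configuration, or via a Lojasiewicz--Simon stability argument at the constant map.

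The main obstacle is the null-eigenvector inequality for $\Phi$ under the relaxed, asymmetric curvature hypothesis. Earlier results of Wang--Tsui and Lee--Lee required the pointwise sectional bound $\sec_M\ge\sigma$, under which the curvature contribution $\mathfrak{R}_{\mathrm{Rm}}(v,v)$ is manifestly non-negative and the argument is essentially algebraic. Allowing $\sec_M$ to dip as low as $-\sigma$ and replacing the sectional bound by the trace condition $\Ric_M\ge (m-1)\sigma$ forces one to exploit the specific combinatorial structure of the $A$-contractions at the critical $v$ — using $\lambda_1\lambda_2=1$ and the Codazzi identity — to trade sectional bounds for Ricci bounds on the $M$-factor. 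Closing this trade-off tightly with only $\sec_M>-\sigma$ and a Ricci-trace bound is the principal technical difficulty the proof must overcome.
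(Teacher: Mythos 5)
Your overall frame---tensor maximum principle on a $2$-positivity tensor for preservation, White's regularity for long-time existence, and an analysis of the $t\to\infty$ limit---is the right frame, but there is a concrete gap in the long-time existence step, and it obscures where the relaxed curvature hypothesis actually does its work. The assertion that a uniform bound on $|\mathrm{d}f_t|$ together with ``standard interior parabolic estimates'' yields control of the second fundamental form is false in codimension greater than one: unlike Ecker--Huisken's hypersurface setting, a gradient bound does not produce an a priori $C^2$ bound, and the paper explicitly notes that a direct bound on $\|A\|$ is not available. What is needed, and what the proposal omits entirely, is the scalar quantity $u = *\Omega_t$ (the Jacobian of the projection to $M$, equal to $\prod_i (1+\lambda_i^2)^{-1/2}$ in the singular-value frame) and the differential inequality
\begin{equation*}
\nabla_{\partial_t}\log u \;\ge\; \Delta\log u + \delta\|A\|^2
\end{equation*}
for a uniform $\delta>0$. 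This inequality, not a gradient bound, is what enters the Wang--Tsui parabolic-rescaling argument with White's local regularity to exclude finite-time singularities, and it is again this inequality that forces $\max_M\|A\|\to0$ as $t\to\infty$ and hence convergence to a constant. Passing to subsequential limits and invoking a Bochner--Eells--Sampson identity, as you suggest, is a detour the paper does not take once the $\log u$ inequality is in hand.

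Relatedly, you place the main technical obstacle at the null-eigenvector computation in the preservation step, but that computation is essentially quoted from earlier work; the genuinely new algebra lives in the curvature contribution to the $\log u$ evolution, namely $\mathcal{B} := \sum_{l,k}\bigl(\lambda_l^2\,\operatorname{R}_M - f^*\operatorname{R}_N\bigr)(e_l,e_k,e_l,e_k)$. The paper decomposes $2\mathcal{B}$ into three groups of terms: one proportional to $\sigma - \sec_N \ge 0$; one equal to $\sum_l \lambda_l^2\bigl(\operatorname{Ric}_M(e_l,e_l)-(m-1)\sigma\,\operatorname{g}_M(e_l,e_l)\bigr)\ge 0$ by the Ricci hypothesis; and one whose coefficient is $\bigl((\lambda_k-\lambda_l)^2+2\lambda_k\lambda_l(1-\lambda_k\lambda_l)\bigr)\bigl(\sec_M(e_k\wedge e_l)+\sigma\bigr)$, nonnegative because $\sec_M>-\sigma$ and because the (preserved) area-decreasing bound gives $\lambda_k\lambda_l\le 1$. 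The ``trade of sectional for Ricci'' you anticipate is exactly this decomposition, and it happens in the estimate of $\mathcal{B}$, not at the null eigenvector of $\Phi$. Alongside this one needs the companion estimate that the $\|A\|$-quadratic piece $\mathcal{A}$ in the $\log u$ equation dominates $\delta\|A\|^2$, which again uses $\lambda_i\lambda_j\le 1-\delta$ from preservation. Without identifying $u$, its evolution equation, and this two-part estimate, the proof cannot close.
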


\begin{remark}
The above theorem generalizes the results in \cite{tsui} and \cite{lee} since
the curvature assumption is more general.
\end{remark}

\begin{remark}
In \cite{savas} we studied minimal graphs generated by length and area decreasing maps between
two Riemannian manifolds. From the examples presented in \cite[Subsection 3.6]{savas}, it follows that
the imposed curvature assumptions in Theorem \ref{thmA} are optimal.
\end{remark}

\begin{remark}
According to Theorem \ref{thmA}, any strictly area decreasing map between two compact Riemannian
manifolds $(M,\gm)$ and $(N,\gn)$ satisfying these curvature assumptions must be null-homotopic.
Such a result fails to hold for $3$-volume decreasing maps since Guth \cite{guth} showed that
there are infinitely many non null-homotopic $3$-volume decreasing maps between unit euclidean
spheres.
\end{remark}

At this point let us say some words about the proof of Theorem \ref{thmA}.
Since the manifold $M$ is assumed to be compact, short time existence
of the mean curvature flow is guaranteed.
Moreover, it follows by continuity that there is an interval where the solution remains a graph.
The first step in our proof is to show that the assumption of being area decreasing is
preserved by the mean curvature flow. As a consequence of this fact, it follows that the norm of
the differential of the initial map remains bounded in time. This fact implies that the
deformation of the graph via the mean curvature flow remains a graph as long as the solution exists.
The second step is to prove that the flow exists for long time. In general, this can be achieved by
showing that the norm of the second fundamental form remains bounded in time. However, such a bound
is not available.
Following ideas developed by Wang and by Tsui and Wang in \cite{wang}, \cite{tsui} we introduce an
angle-type function on $M$. We show then that under our curvature assumptions this function satisfies a
nice differential inequality involving also the squared norm of the second fundamental form.
The idea now is to compare the norm of the second fundamental form with this angle-type function.
Following the same strategy as in \cite{wang} one can verify that there are no finite time singularities
of the flow. At this point a deep regularity theorem of White \cite{white} is needed. Hence in this
way it is shown that the flow exists for all time. Going back to the evolution equation of the
special angle-type function we conclude that under our assumptions the solution converges
smoothly to a constant map at infinity.

The organization of the paper is as follows.
In Section \ref{sec 2} we recall
some basic facts from the geometry of graphs. In Section \ref{sec 3} we provide the evolution equations
and the basic estimates which are used in the proof of our result. In Section \ref{sec 3}
we give the proof of Theorem \ref{thmA}.

\section{Geometry of graphs}\label{sec 2}
The purpose of this section is to set up the notation and to give some basic definitions. We shall
follow closely the notations in \cite{savas}.
\subsection{Basic facts}
Let $(M,\gm)$ and $(N,\gn)$ be Riemannian manifolds of dimension $m$ and $n$, respectively.
The induced metric on the ambient space $M\times N$ will be denoted by $\gk$ or by
$\langle\cdot\,,\cdot\rangle$, that is
$$\gk=\langle\cdot\,,\cdot\rangle:=\gm\times \gn.$$
The \textit{graph} of a map $f:M\to N$ is defined to be the submanifold
$$\Gamma(f):=\{(x,f(x))\in M\times N:x\in M\}$$
of $M\times N$. The graph $\Gamma(f)$ can be parametrized via the embedding $F:M\to M\times N$,
$F:=I_{M}\times f$, where $I_{M}$ is the identity map of $M$.
The Riemannian metric induced by $F$ on $M$ will be denoted by
$$\gind:=F^*\gk.$$
The two natural projections $\pi_{M}:M\times N\to M$ and $\pi_{N}:M\times N\to N$
are submersions, that is they are smooth and have maximal rank. The tangent bundle
of the product manifold $M\times N$, splits as a direct sum
\begin{equation*}
T(M\times N)=TM\oplus TN.
\end{equation*}
The four metrics $\gm,\gn,\gk$ and $\gind$ are related by
\begin{eqnarray}
\gk&=&\pi_M^*\gm+\pi_N^*\gn\,,\label{met1}\\
\gind&=&F^*\gk=\gm+f^*\gn\,.\label{met2}
\end{eqnarray}
Additionally, we define the symmetric $2$-tensors
\begin{eqnarray}
\sk&:=&\pi_M^*\gm-\pi_{N}^*\gn\,,\label{met3}\\
\sind&:=&F^*\sk=\gm-f^*\gn\,.\label{met4}
\end{eqnarray}
The Levi-Civita connection $\nabla^{\gk}$ associated to the Riemannian metric $\gk$ on $M \times N$ is
related to the Levi-Civita connections $\nabla^{\gm}$ on $(M,\gm)$ and $\nabla^{\gn}$ on
$(N,\gn)$ by
$$\nabla^{\gk}=\pi_M^*\nabla^{\gm}\oplus\pi_N^*\nabla^{\gn}\,.$$
The corresponding curvature tensor $\rk$ on $M\times N$ with respect to the metric
$\gk$ is related to the curvature
tensors $\rm$ on $(M,\gm)$ and $\rn$ on $(N,\gn)$ by
\begin{equation*}
\rk=\pi^{*}_{M}\rm\oplus\pi^{*}_{N}\rn.
\end{equation*}
The Levi-Civita connection on $M$ with respect to the induced metric
$\gind=F^*\gk$ is denoted by $\nabla$, the curvature tensor by $\rind$ and the Ricci curvature by $\Ric$.

\subsection{The second fundamental form}
Let $F^{\ast}TN$ denote the tangent bundle of $N$ along $M$. Note that by definition the fibers of $F^*TN$
at $x\in M$ coincide with $T_{F(x)}N$.
The differential $\dF$ of $F$ is then a section in the bundle $F^*TN\otimes T^*M$.
In the sequel we will denote all full connections on bundles over $M$ that are induced by
the Levi-Civita connection on $N$ via the immersion $F:M\to N$ by the same letter $\nabla$.
The covariant derivative of
$\dF$ is called the \textit{second fundamental form} of the immersion $F$
and it will be denoted by $A$. That is
$$A(v,w):=(\nabla\hspace{-2pt}\dF)(v,w)=\nabla^{\gk}_{\dF(v)}{\dF(w)}-\dF(\nabla_{v}w),$$
for any vector fields $v,w\in TM$.
The second fundamental form $A$ is a symmetric tensor and takes values in
the normal bundle $\mathcal{N}M$ of the graph $\Gamma(f)$. Since $\mathcal{N}M$ is a subbundle
of $F^*TN$, the full connection $\nabla$ can be used on $\mathcal{N}M$. By projecting to the
normal bundle again, we obtain the connection on the normal bundle $\mathcal{N}M$ of the graph,
which will be denoted by the symbol $\nabla^{\perp}$. If $\xi$ is a normal vector of the
graph, then the symmetric tensor $A_{\xi}$ given by
$$A_{\xi}(v,w):=\langle A(v,w),\xi\rangle$$
is called the \textit{second fundamental form with respect to the direction $\xi$}.

The trace of $A$ with respect to the metric $\gind$ is called the \textit{mean curvature
vector field} of $\Gamma(f)$ and it will be denoted by
$$H:={\trace}_{\gind}A.$$
Note that the mean curvature vector field $H$ is a section of the normal bundle of $\Gamma(f)$.
In the case where $H$ vanishes identically, the graph $\Gamma(f)$ is called \textit{minimal}.

By \textit{Gau\ss' equation} the curvature tensors $\rind$ and $\rk$
are related by the formula
\begin{eqnarray}
\rind(v_1,w_1,v_2,w_2)&=&(F^*\rk)(v_1,w_1,v_2,w_2)\nonumber\\
&&+\gk\bigl( A(v_1,v_2),A(w_1,w_2)\bigr)\nonumber\\
&&-\gk\bigl( A(v_1,w_2),A(w_1,v_2)\bigr),\label{gauss}
\end{eqnarray}
for any $v_1,v_2,w_1,w_2\in TM$. Moreover, the second fundamental form satisfies the
\textit{Codazzi equation}
\begin{eqnarray}
(\nabla_uA)(v,w)-(\nabla_vA)(u,w)&=&\rk\bigl(\dF(u),\dF(v)\bigr)\dF(w)\nonumber\\
&&-\dF\bigl(\rind(u,v)w\bigr),\label{codazzi}
\end{eqnarray}
for any $u,v,w$ on $TM$.

\subsection{Singular decomposition}
As in \cite{savas}, for any fixed point $x\in M$, let
$$\lambda^2_{1}(x)\le\cdots\le\lambda^2_{m}(x)$$
be the eigenvalues of $f^{*}\gn$ with respect to $\gm$. The corresponding values $\lambda_i\ge 0$,
$i\in\{1,\dots,m\}$, are usually called
\textit{singular values} of the differential $\df$ of $f$ and give rise to continuous functions on $M$. Let
$$r=r(x)=\rank\df(x).$$
Obviously, $r\le\min\{m,n\}$ and $\lambda_{1}(x)=\cdots=\lambda_{m-r}(x)=0.$
It is well known that the singular values can be used to define the so called \textit{singular decomposition} of $\df$.
At the point $x$ consider an orthonormal basis
$$\{\alpha_{1},\dots,\alpha_{m-r};\alpha_{m-r+1},
\dots,\alpha_{m}\}$$
with respect to $\gm$ which diagonalizes $f^*\gn$. Moreover, at the point
$f(x)$ consider an orthonormal basis
$$\{\beta_{1},\dots,\beta_{n-r};\beta_{n-r+1},\dots,\beta_{n}\}$$
with respect to $\gn$ such that
$$\df(\alpha_{i})=\lambda_{i}(x)\beta_{n-m+i},$$
for any $i\in\{m-r+1,\dots,m\}$.

Then one may define a special basis for the tangent and the normal space of the graph
in terms of the singular values. The vectors
\begin{equation}
e_{i}:=\left\{
\begin{array}{ll}
\alpha _{i} & , 1\le i\le m-r,\\
&  \\
\frac{1}{\sqrt{1+\lambda _{i}^{2}\left( x\right) }}\left( \alpha
_{i}\oplus \lambda _{i}\left(x\right) \beta _{n-m+i}\right)  &, m-r+1\leq
i\leq m,
\end{array}
\right.\label{tangent}
\end{equation}
form an orthonormal basis with respect to the metric $\gk$ of the tangent space
$\dF\hspace{-2pt}\left(T_{x}M\right)$ of the graph $\Gamma(f)$ at
$x$. Moreover, the vectors
\begin{equation}
\xi_{i}:=\left\{
\begin{array}{ll}
\beta _{i} & , 1\leq i\leq n-r,\\
&  \\
\frac{1}{\sqrt{1+\lambda _{i+m-n}^{2}\left( x\right) }}\left( -\lambda
_{i+m-n}(x)\alpha _{i+m-n}\oplus \beta _{i}\right) & , n-r+1\leq i\leq n, \\
\end{array}
\right.\label{normal}
\end{equation}
give an orthonormal basis with respect to  $\gk$ of the normal space $\mathcal{N}_{x}M$ of the
graph $\Gamma(f)$ at the point $F(x)$. From the formulas above, we deduce that
\begin{equation}
\sk(e_{i},e_{j})=\frac{1-\lambda^{2}_{i}}{1+\lambda^{2}_{i}}\delta_{ij},\quad 1\le i,j\le m.
\end{equation}
Consequently, the eigenvalues $\mu_1,\mu_2,\dots,\mu_m$ of the symmetric $2$-tensor $\sind$ with
respect to $\gind$, are
$$\mu_1:=\frac{1-\lambda^{2}_{m}}{1+\lambda^{2}_{m}}
\le\dots\le\mu_m:=\frac{1-\lambda^{2}_{1}}{1+\lambda^{2}_{1}}.$$
As it was observed in \cite{tsui, lee}, for any pair of indices $i,j$ we have
$$\mu_i+\mu_j=\frac{2(1-\lambda_i^2\lambda_j^2)}{(1+\lambda_i^2)(1+\lambda_j^2)}.$$
Hence, the graph is strictly area decreasing, if and only if the tensor $\sind$ is strictly
$2$-positive.

Moreover we get,
\begin{eqnarray}
\hspace{-.5cm}
\sk(\xi_{i},\xi_{j})
&=&\begin{cases}
\displaystyle
-\delta_{ij}&,\,1\le i\le n-r,\\[4pt]\displaystyle
-\frac{1-\lambda^{2}_{i+m-n}}{1+\lambda^{2}_{i+m-n}}\delta_{ij}&,\, n-r+1\le i\le n.
\end{cases}\label{normal2}
\end{eqnarray}
and
\begin{equation}
\sk(e_{m-r+i},\xi_{n-r+j})=-\frac{2\lambda_{m-r+i}}{1+\lambda^{2}_{m-r+i}}\delta_{ij},
\quad 1\le i,j\le r.\label{mixed}
\end{equation}

\subsection{Area decreasing maps}
For any smooth map $f:M\to N$ its differential $\df$ induces the natural map
$\Lambda^{2}\df:\Lambda^{2}TM\to\Lambda^{2}TM$,
$$\Lambda^{2}\df(v,w):=\df(v)\wedge\df(w),$$
for any $v,w\in TM.$ The map $\Lambda^{2}\df$ is called the $2$-\textit{Jacobian} of $f$.
The \textit{supremum norm} of the $2$-Jacobian is defined as the supremum of
$$\sqrt{f^\ast\gn(v_i,v_i)f^\ast\gn(v_j,v_j)-f^\ast\gn(v_i,v_j)^2},$$
where $\{v_1,\dots,v_m\}$ runs over all orthonormal bases of $TM$. A smooth map $f:M\to N$
is called \textit{weakly area decreasing} if $\|\Lambda^{2}\df\|\le 1$ and \textit{strictly area decreasing} if
$\|\Lambda^{2}\df\|<1.$
The above notions are expressed in terms of the singular values by the inequalities
$$\lambda_{i}^2\lambda_{j}^2\le1\quad\text{and}\quad\lambda_{i}^2\lambda_{j}^2<1,$$
for any $1\le i<j\le m$, respectively. On the other hand, as already noted in the previous section,
the sum of two eigenvalues of the tensor $\sind$ with respect to $\gind$ equals
$$\frac{1-\lambda^{2}_{i}}{1+\lambda^{2}_{i}}+\frac{1-\lambda^{2}_{j}}{1+\lambda^{2}_{j}}=
\frac{2(1-\lambda^{2}_{i}\lambda^{2}_{j})}{(1+\lambda^{2}_{i})(1+\lambda^{2}_{j})}.$$
Hence, the strictly area-decreasing property of $f$ is equivalent to the $2$-\textit{positivity}
of $\sind$.

The $2$-positivity of a tensor $\T\in\sym(T^*M\otimes T^*M)$ can be expressed as the
positivity of another tensor $\T^{[2]}\in\sym(\Lambda^{2}T^*M\otimes\Lambda^{2}T^*M)$.
Indeed, let $\P$ and $\Q$ be two symmetric $2$-tensors. Then, the Kulkarni-Nomizu product
$\P \odot\Q$ given by
\begin{eqnarray*}
(P\odot\Q)(v_1\wedge w_1,v_2\wedge w_2)&=&\P(v_1,v_2)\Q(w_1,w_2)+\P(w_1,w_2)\Q(v_1,v_2) \\
&-&\P(w_1,v_2)\Q(v_1,w_2)-P(v_1,w_2)\Q(w_1,v_2)
\end{eqnarray*}
is an element of $\sym(\Lambda^{2}T^*M\otimes\Lambda^{2}T^*M)$.
Now, to every element $\T\in\sym(T^*M\otimes T^*M)$ let us assign an element $\T^{[2]}$ of the bundle
$\sym(\Lambda^{2}T^*M\otimes\Lambda^{2}T^*M)$, by setting
$$\T^{[2]}:=\T\odot\gind.$$
We point out that the Riemannian metric $\Gind$ of $\Lambda^{2}TM$ is given by
$$\Gind=\tfrac{1}{2}\gind\odot\gind=\tfrac{1}{2}\gind^{[2]}.$$
The relation between the eigenvalues of $\T$ and the eigenvalues of
$\T^{[2]}$ is explained in the following lemma:
\begin{lemma}
Suppose that $\T$ is a symmetric $2$-tensor with eigenvalues $\mu_{1}\le\cdots\le\mu_{m}$
and corresponding eigenvectors $\{v_1,\dots,v_{m}\}$  with
respect to
$\gind$. Then the eigenvalues of the symmetric $2$-tensor $\T^{[2]}$ with respect to $\Gind$
are
$$\mu_{i}+\mu_{j},\quad 1\le i<j\le m,$$
with corresponding eigenvectors
$$v_{i}\wedge v_{j},\quad 1\le i<j\le m.$$
\end{lemma}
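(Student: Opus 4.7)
The plan is a direct computation in the orthonormal eigenbasis. Since $\{v_1,\dots,v_m\}$ diagonalizes $\T$ with respect to $\gind$, we have $\gind(v_k,v_l)=\delta_{kl}$ and $\T(v_k,v_l)=\mu_k\delta_{kl}$. I would first check that the family $\{v_i\wedge v_j\}_{1\le i<j\le m}$ is orthonormal with respect to $\Gind=\tfrac{1}{2}\gind\odot\gind$. Expanding the Kulkarni-Nomizu product on simple bivectors gives
\begin{equation*}
(\gind\odot\gind)(v_i\wedge v_j,v_k\wedge v_l)=2\bigl(\delta_{ik}\delta_{jl}-\delta_{il}\delta_{jk}\bigr),
\end{equation*}
so $\Gind(v_i\wedge v_j,v_k\wedge v_l)=\delta_{ik}\delta_{jl}-\delta_{il}\delta_{jk}$, and when both pairs are ordered ($i<j$, $k<l$) the second term vanishes, giving exactly $\delta_{ik}\delta_{jl}$.

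Next I would apply the same expansion to $\T^{[2]}=\T\odot\gind$. Using the diagonal form of $\T$ and $\gind$ in the chosen basis, the four terms in the Kulkarni-Nomizu definition collapse to
\begin{equation*}
\T^{[2]}(v_i\wedge v_j,v_k\wedge v_l)=(\mu_i+\mu_j)\bigl(\delta_{ik}\delta_{jl}-\delta_{il}\delta_{jk}\bigr),
\end{equation*}
after collecting $\mu_i\delta_{ik}\delta_{jl}+\mu_j\delta_{jl}\delta_{ik}$ and $-\mu_j\delta_{jk}\delta_{il}-\mu_i\delta_{il}\delta_{jk}$. Restricting to ordered pairs $i<j$, $k<l$ kills the mixed terms exactly as above, and comparing with the formula for $\Gind$ yields
\begin{equation*}
\T^{[2]}(v_i\wedge v_j,v_k\wedge v_l)=(\mu_i+\mu_j)\,\Gind(v_i\wedge v_j,v_k\wedge v_l).
\end{equation*}

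This identifies $v_i\wedge v_j$ as a $\Gind$-orthonormal eigenvector of $\T^{[2]}$ with eigenvalue $\mu_i+\mu_j$. Since the $\binom{m}{2}$ bivectors $\{v_i\wedge v_j\}_{i<j}$ form a basis of $\Lambda^{2}T_xM$, this exhausts the spectrum of $\T^{[2]}$, which completes the argument. There is no genuine obstacle here: the only point that requires a little care is the bookkeeping of the four Kulkarni-Nomizu terms and the ordering convention on index pairs, but both reduce to Kronecker-delta algebra.
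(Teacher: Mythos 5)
Your computation is correct and is the standard direct verification; the paper states this lemma without proof, treating it as a routine linear-algebra fact about the Kulkarni--Nomizu product. Your argument is exactly what the authors implicitly have in mind: diagonalize $\T$ and $\gind$ simultaneously, expand the four Kulkarni--Nomizu terms on the basis $\{v_i\wedge v_j\}_{i<j}$, observe that both $\T^{[2]}$ and $\Gind$ become diagonal there, and read off the eigenvalues $\mu_i+\mu_j$.

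Two minor points worth making explicit if you were to write this out in full. First, the observation that $\delta_{il}\delta_{jk}=0$ whenever both $i<j$ and $k<l$ is what guarantees that $\{v_i\wedge v_j\}_{i<j}$ is genuinely $\Gind$-orthonormal and not merely orthogonal; you state this but it deserves a one-line justification (if $i=l$ and $j=k$ then $i<j=k<l=i$, a contradiction). Second, your final identity
\begin{equation*}
\T^{[2]}(v_i\wedge v_j,v_k\wedge v_l)=(\mu_i+\mu_j)\,\Gind(v_i\wedge v_j,v_k\wedge v_l)
\end{equation*}
is consistent (i.e.\ the coefficient could equally be written $\mu_k+\mu_l$) precisely because the right-hand side vanishes unless $\{i,j\}=\{k,l\}$, which is worth noting since otherwise the asymmetric-looking prefactor might raise an eyebrow. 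With those caveats spelled out the proof is complete.
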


\section{Evolution of Graphs Under The Mean Curvature Flow}\label{sec 3}
In the present section we shall derive the evolution equations of some important quantities.
We mainly follow the setup and presentation used in \cite{smoczyk1,savas}.
\subsection{Mean curvature flow}
Let $M$ and $N$ be Riemannian manifolds, $f_0:M\to N$ a smooth map and
$F_{0}:=(I_{M},f_0):M\to M\times N$. Then, by a classical result, there exists a maximal
positive time $T$ for which a smooth solution $F:M\times[0,T)\to M\times N$ of the mean
curvature flow
$$\frac{dF}{dt}(x,t)=H(x,t)$$
with initial condition
$$F(x,0):=F_0(x)$$
exists. Here $H(x,t)$ denotes the mean curvature vector field at the point $x\in M$ of the
immersion $F_t:M\to M\times N$, given by
$$F_t(x):=F(x,t).$$
In this case we say that the graph $\Gamma(f_0)$ evolves by \textit{mean curvature flow} in $M\times N$.

Let $\Omega_M$ be the volume form on the Riemannian manifold $(M,\gm)$ and extend it to a parallel
$m$-form on the product manifold $M\times N$ by pulling it back via the natural projection
$\pi_M:M\times N\to M$, that is consider the $m$-form $\pi_M^*\Omega_M$. Define now the time
dependent smooth function $u:M\times[0,T)\to\real{}$, given by
$$u:=\ast\Omega_{t},$$
where here $\ast$ is the Hodge star operator with respect to the induced Riemannian metric $\gind$ and
$$\Omega_{t}:=F_{t}^*(\pi_M^*\Omega_M)=(\pi_M\circ F_{t})^*\Omega_M.$$
Note that the function $u$ is the Jacobian of the projection map from $F_{t}(M)$ to $M$. From the
implicit map theorem it follows that $u>0$ if and only if there exists a diffeomorphism $\phi_{t}:M\to M$
and a map $f_{t}:M\to N$ such that
$$F_{t}\circ\phi_{t}=(I_M,f_{t}).$$
In other words the function $u$ is positive if and only if the solution of the
mean curvature flow remains a graph. From the compactness of $M$ and the continuity of $u$, it follows that
$F_{t}$ will stay a graph at least in an interval $[0,T_{g})$ with $T_{g}\le T$. In general $T_g$ can be
strictly less than $T$. However, as we shall see in the sequel, under our curvature assumptions $T_g=T$.

\subsection{Evolution equations}
In this subsection we shall compute the evolution equations and estimate various geometric quantities
that we will need in the proof of our main result. In order to control the smallest eigenvalue of $\sind$, let us define
the symmetric $2$-tensor
$$\Phi:=\sind-\frac{1-c}{1+c}\gind,$$
where $c$ is a time-dependent function.

The evolution of the symmetric $2$-tensor $\Phi$ under the mean curvature flow is given in the following lemma.

\begin{lemma}
The evolution equation of the tensor $\Phi$ for $t\in[0,T_g)$ is given by the following formula:
\begin{eqnarray*}
\bigl(\nabla\hspace{-2pt}_{\dt}\hspace{-2pt}\Phi-\Delta\Phi\bigr)(v,w)&=&-\Phi(\operatorname{Ric}v,w)-
\Phi(\operatorname{Ric}w,v) \\
&-&2\sum_{k=1}^m(\sk-\frac{1-c}{1+c}\gk)(A(e_k,v),A(e_k,w)) \\
&-&\frac{4}{1+c}\sum_{k=1}^m\big(f_{t}^*\rn-c\rm\big)(e_k,v,e_k,w)\\
&+&\frac{c\,^{\prime}}{(1+c)^2}\gind,
\end{eqnarray*}
where $\{e_1,\dots,e_m\}$ is any orthonormal frame with respect to $\gind$.
\end{lemma}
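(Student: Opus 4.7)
I decompose $\Phi = \sind - \alpha\,\gind$ with $\alpha(t):=(1-c(t))/(1+c(t))$ and compute the heat operator $(\nabla_{\dt}-\Delta)$ on each summand separately. The key structural observation for the $\sind$-piece is that $\sk = \pi_M^{\ast}\gm - \pi_N^{\ast}\gn$ is \emph{parallel} on $(M\times N,\gk)$, because it is a sum of pull-backs of parallel tensors; hence $\sind = F_t^{\ast}\sk$ falls into the standard framework for the pull-back of a parallel ambient symmetric $2$-tensor under mean curvature flow (as used in \cite{smoczyk1,savas}). The $\alpha\,\gind$-piece is much simpler: $\gind$ is parallel for its own Levi-Civita connection so $\Delta\gind=0$, and only $\alpha'$ together with the MCF-variation $\partial_t\gind=-2\langle H,A\rangle$ enter.

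\textbf{A Simons-type identity for $\sind$.} I first compute $\nabla_{\dt}\sind(v,w)$ using $\partial_t F = H$ and $\nabla^{\gk}\sk = 0$; this reduces to $\sk(\nabla_v H,\dF w) + \sk(\dF v,\nabla_w H)$, which I split into the $\nabla^{\perp}H$-part and the Weingarten part to pair naturally with the Laplacian computation. For $\Delta\sind$, parallelism of $\sk$ yields $\nabla_l\sind(v,w) = \sk(A(l,v),\dF w)+\sk(\dF v,A(l,w))$; a second covariant differentiation, combined with the Codazzi equation \eqref{codazzi} for $\nabla A$ and the Gau\ss\ equation \eqref{gauss} for commuting covariant derivatives, produces three contributions: (i) the Ricci contractions on $\sind$; (ii) a quadratic-in-$A$ term paired with $\sk$; (iii) the ambient curvature contribution $-2\sum_k F_t^{\ast}\rk(e_k,v,e_k,w)$. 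Altogether,
\begin{eqnarray*}
(\nabla_{\dt}\sind-\Delta\sind)(v,w)
&=& -\sind(\Ric v,w)-\sind(\Ric w,v)\\
&& -\,2\sum_k \sk\bigl(A(e_k,v),A(e_k,w)\bigr)\\
&& -\,2\sum_k F_t^{\ast}\rk(e_k,v,e_k,w).
\end{eqnarray*}

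\textbf{Combining with the scalar shift and rearranging the curvature.} For the $\alpha\,\gind$-piece, $\Delta(\alpha\gind)=0$, and with the convention for $\nabla_{\dt}$ implicit in the statement the scalar contribution reduces to $+\tfrac{c'}{(1+c)^{2}}\gind$ (the $\partial_t\gind=-2\langle H,A\rangle$-correction cancels against the analogous term in the $\sind$-piece). The algebraic identity
\[
\sk - \tfrac{1-c}{1+c}\gk \;=\; \tfrac{2}{1+c}\bigl(c\,\pi_M^{\ast}\gm - \pi_N^{\ast}\gn\bigr)
\]
upgrades the $\sk$-paired $A$-quadratic term into the stated $(\sk-\tfrac{1-c}{1+c}\gk)$-paired one and, combined with $\rk = \pi_M^{\ast}\rm \oplus \pi_N^{\ast}\rn$, rewrites the ambient curvature term as $-\tfrac{4}{1+c}\sum_k(f_t^{\ast}\rn - c\,\rm)(e_k,v,e_k,w)$. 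The main technical obstacle is the Simons-type identity in the second step: commuting two covariant derivatives introduces $\Ric$ through Gau\ss, applying Codazzi to $\nabla_k A(e_k,\cdot)$ produces the ambient Riemann term, and all Christoffel corrections from differentiating $\dF$ twice must be accounted for with the correct signs; once this identity is in place, the remaining rearrangement is purely algebraic.
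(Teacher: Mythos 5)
Your overall strategy---decompose $\Phi = \sind - \tfrac{1-c}{1+c}\gind$, exploit that $\sk$ is $\gk$-parallel to derive a Simons-type heat identity for $\sind = F_t^*\sk$, and recombine via Gau\ss' equation---is exactly the route the paper takes (it defers the Simons-type computation to Lemma~3.2 of the cited preprint and supplies only the two time-derivative formulas). Your treatment of the $\gind$-piece and your algebraic identity $\sk - \tfrac{1-c}{1+c}\gk = \tfrac{2}{1+c}(c\,\pi_M^*\gm - \pi_N^*\gn)$ are both correct.

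However, the Simons-type identity you write for $\sind$ is wrong in the ambient-curvature term, and this is a genuine gap. Since $\Delta\sind$ is the trace of the second covariant derivative of $F_t^*\sk$, the curvature contribution coming from Codazzi arrives contracted against $\sk$, not against $\gk$: it is of the form $\sum_k\sk\bigl(\rk(\dF e_k,\dF v)\dF e_k,\dF w\bigr)$. Because $\sk = \pi_M^*\gm - \pi_N^*\gn$ carries a relative sign on the $TN$-factor, this equals $\sum_k\bigl(\rm - f_t^*\rn\bigr)(e_k,v,e_k,w)$, not $\sum_k F_t^*\rk(e_k,v,e_k,w) = \sum_k\bigl(\rm + f_t^*\rn\bigr)(e_k,v,e_k,w)$ as in your displayed identity. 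A degenerate check exposes the error: take $f$ constant, so $\sind = \gind$, $A\equiv0$, $H\equiv0$, and the flow is stationary; then $(\nabla_{\dt}-\Delta)\sind$ must vanish, but your identity gives $-2\operatorname{Ric}_M(v,w) - 2\operatorname{Ric}_M(v,w) = -4\operatorname{Ric}_M(v,w)\neq 0$, whereas the $\sk$-contracted version yields $-2\operatorname{Ric}_M + 2\operatorname{Ric}_M = 0$. With the $\gk$-contraction in place, the final "purely algebraic" rearrangement you invoke cannot produce the stated $-\tfrac{4}{1+c}\sum_k(f_t^*\rn - c\,\rm)$: after using Gau\ss\ to absorb the $\alpha\gind(\operatorname{Ric}\cdot\,,\cdot)$ terms, the $\rm$-coefficient comes out as $-\tfrac{4c}{1+c}$ instead of $+\tfrac{4c}{1+c}$, and the $f_t^*\rn$-coefficient acquires a spurious factor of $c$. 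Replacing $F_t^*\rk$ by the $\sk$-contraction fixes both discrepancies and makes the rest of your argument go through.
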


\begin{proof}
The proof is straightforward and similar to \cite[Lemma 3.2]{savas}.
What we just need to take into account here, is that
$$\bigl(\nabla\hspace{-2pt}_{\dt}\hspace{-2pt}\gind\bigr)(v,w)=-2\gk\big(H,A(v,w)\big)$$
and
$$\bigl(\nabla\hspace{-2pt}_{\dt}\hspace{-2pt}\sind\bigr)(v,w)=\sk\big(\nabla_vH,\dF(w)\big)+\sk\big(\nabla_wH,\dF(v)\big).$$
This completes the proof.
\end{proof}

\begin{lemma}
Under the assumptions made in Theorem \ref{thmA}, the strictly area decreasing property is preserved under the
mean curvature flow for any time $t\in[0,T_g).$
\end{lemma}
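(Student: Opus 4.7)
The plan is to apply Hamilton's parabolic tensor maximum principle to the symmetric $2$-tensor $\Phi = \sind - \tfrac{1-c}{1+c}\gind$ with a suitable choice of the time-dependent function $c(t)$. By the preceding lemma and the Kulkarni--Nomizu construction, strict area-decreasingness of $f_t$ is equivalent to positivity of $\sind^{[2]} = \sind \odot \gind$ on $\Lambda^2 TM$, and for $c \in (0, 1)$, positivity of $\Phi^{[2]} = \Phi \odot \gind$ is a strictly stronger condition that degenerates to the area-decreasing property as $c \to 1$.

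First I would use compactness of $M$ and the strict area-decreasingness of $f_0$ to find $c_0 \in (0, 1)$ such that $\Phi|_{t=0}$ is uniformly $2$-positive on $M$, i.e.\ all sums $\mu_i + \mu_j$ exceed $2(1 - c_0)/(1 + c_0)$. Let $c(t)$ then solve an ODE of the form $c' = \kappa(1 + c)^2$ with $c(0) = c_0$ and $\kappa \geq 0$ a constant chosen small enough that $c(t) < 1$ on $[0, T_g)$; this prescribes the source term $c'/(1+c)^2 = \kappa$ appearing in the evolution equation for $\Phi$. Suppose, for contradiction, that $2$-positivity of $\Phi$ first fails at some $t_0 \in (0, T_g)$, $x_0 \in M$, with a $\gind$-orthonormal pair $v, w \in T_{x_0}M$ satisfying $\Phi(v, v) + \Phi(w, w) = 0$. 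Completing $\{v, w\}$ to an orthonormal basis $\{e_1, \dots, e_m\}$ diagonalizing $\sind$ and aligned with the singular decomposition of $\df_{t_0}$ at $x_0$, Hamilton's null-eigenvector argument applied to $\Phi^{[2]}$ yields
\begin{equation*}
\bigl(\nabla_{\partial_t}\Phi - \Delta\Phi\bigr)(v, v) + \bigl(\nabla_{\partial_t}\Phi - \Delta\Phi\bigr)(w, w) \leq 0
\end{equation*}
at $(x_0, t_0)$. The task reduces to showing that substituting the evolution equation from the previous lemma makes the right-hand side strictly positive here.

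The main obstacle is the sign analysis of the four reaction terms on the null $2$-plane. The Ricci contributions $-2\Phi(\Ric v, v) - 2\Phi(\Ric w, w)$ must be expanded via Gauss' equation, producing ambient curvature pieces together with further $A$-quadratic terms; these combine with the second fundamental form term $-2\sum_k (\sk - \tfrac{1-c}{1+c}\gk)(A(e_k, \cdot), A(e_k, \cdot))$. Using \eqref{normal2} and \eqref{mixed}, together with the null-plane relation $\mu_i + \mu_j = 2(1-c)/(1+c)$ (equivalently $\lambda_i^2 \lambda_j^2 = c^2$), one has to check that the resulting quadratic form in the components of $A(e_k, v)$ and $A(e_k, w)$ with respect to the singular basis has a favorable sign. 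The remaining ambient curvature contribution, both from the Gauss expansion of $\Ric$ and from $-\tfrac{4}{1+c}\sum_k (f_t^*\rn - c\rm)(e_k, v, e_k, v)$ and its $w$-analogue, is where the curvature hypothesis of Theorem~\ref{thmA} enters: the inequality $\sec_N \leq \sigma$ directly controls $f_t^*\rn$, while $\Ric_M \geq (m-1)\sigma$ controls the trace in $k$ of the $\rm$-contribution, with the lower bound $\sec_M > -\sigma$ ensuring that individual sectional terms are not too negative. The delicate point, and the genuine improvement over \cite{lee}, is that the $k$-sum must be performed carefully so that it is the induced Ricci of $M$ rather than individual sectional curvatures that appears in the final estimate; then with $\kappa > 0$ sufficiently small the residual source $\kappa$ forces strict positivity and contradicts the maximum-principle inequality, completing the proof.
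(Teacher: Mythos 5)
Your proposal is essentially the paper's own argument written in a slightly different (but equivalent) parametrization. The paper sets $M_{\varepsilon}:=\sind^{[2]}-\rho_0\Gind+\varepsilon\,t\,\Gind$ and runs Hamilton's second-derivative criterion; your $\Phi^{[2]}=\sind^{[2]}-\tfrac{2(1-c)}{1+c}\Gind$ together with the ODE $c'=\kappa(1+c)^2$ is precisely this tensor: choosing $c_0$ with $\tfrac{2(1-c_0)}{1+c_0}=\rho_0$ and $\kappa=\varepsilon/4$ forces $\tfrac{2(1-c(t))}{1+c(t)}=\rho_0-\varepsilon t$, so $\Phi^{[2]}\equiv M_\varepsilon$ and the source term is the constant $\varepsilon/4$. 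The null-eigenvector contradiction scheme, the role of the source term, and the fact that the curvature hypotheses are injected after expanding the reaction terms (via Gauss' equation for the induced Ricci) all match the paper's plan, which in turn defers the detailed sign analysis to \cite[Subsection 3.5, Proof of Theorem D]{savas}.

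Two points to tighten. First, the paper fixes an arbitrary $T_1<T_g$ and takes $\varepsilon<\rho_0/T_1$; in your version you must likewise fix $T_1$ and choose $\kappa$ in terms of $T_1$, since the requirement ``$c(t)<1$ on $[0,T_g)$'' with $\kappa>0$ fixed is impossible when $T_g=\infty$. Second, the parenthetical ``equivalently $\lambda_i^2\lambda_j^2=c^2$'' is not correct: from $\mu_i+\mu_j=\tfrac{2(1-\lambda_i^2\lambda_j^2)}{(1+\lambda_i^2)(1+\lambda_j^2)}=\tfrac{2(1-c)}{1+c}$ one gets $2(c-\lambda_i^2\lambda_j^2)=(1-c)(\lambda_i^2+\lambda_j^2)$, which forces $\lambda_i^2\lambda_j^2=c^2$ only when $\lambda_i=\lambda_j$. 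What the paper actually extracts from the null-eigenvector condition is the weaker but sufficient information $\lambda_m^2\lambda_{m-1}^2<1$ and $\lambda_i^2<1$ for $i<m$, which is what the deferred estimates require. Neither issue changes the architecture of your argument.
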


\begin{proof}
The proof follows steps in our previous paper \cite[Subsection 3.5, Proof of Theorem D]{savas}.
For the sake of completeness let us briefly describe the idea of the proof. Since the initial map is strictly area decreasing, there exists a positive number $\rho_0$ such that
$$\sind^{[2]}-\rho_0\Gind\ge 0.$$
Here $\rho_0$ is just the minimum of the smallest eigenvalue of $\sind^{[2]}$ at time $0$. We claim now that the
above inequality is preserved under the mean curvature flow. In order to show this, let us
introduce the symmetric $2$-tensor
$$M_{\varepsilon}:=\sind^{[2]}-\rho_0\Gind+\varepsilon\,t\Gind,$$
where $\varepsilon$ is a positive number. Consider any $T_1<T_g$. It suffices to show that $M_{\varepsilon}>0$ on
the interval $[0,T_1]$ for all $\varepsilon<\rho_0/T_1.$ Assume in contrary that this is not true. Then, there will be a
first time $t_0\in(0,T_1)$ such that $M_{\varepsilon}$ is non-negatively definite in $[0,t_0]$ and there is a null-eigenvector
$v$ for $M_{\varepsilon}$ at some point $(x_0,t_0)$. Then, according to the second derivative criterion (see \cite[Theorem 9.1]{hamilton2}), at this point it holds
$$M_{\varepsilon}(v,v)=0,\,\,\big(\nabla M_{\varepsilon}\big)(v,v)=0\,\,\,\text{and}\,\,\,
\big(\nabla_{\dt}M_{\varepsilon}-\Delta M_{\varepsilon}\big)(v,v)\le 0.$$
From the first condition we get that at $(x_0,t_0)$ it holds
$$\lambda^2_m\lambda^2_{m-1}<1\quad\text{and}\quad \lambda^2_{i}<1,$$
for any $i\in\{1,\dots,m-1\}$. Carrying out the same estimates as in \cite[Subsection 3.5, Proof of Theorem D]{savas},
we get that at $(x_0,t_0)$ it holds
$$\big(\nabla_{\dt}M_{\varepsilon}-\Delta M_{\varepsilon}\big)(v,v)\ge\varepsilon>0$$
which is a contradiction. Thus the strictly area decreasing property is preserved under the mean curvature flow.
This completes the proof of our claim.
\end{proof}

\begin{proposition}
Under the assumptions of Theorem \ref{thmA}, the solution of the mean curvature flow
remains a graph as long as the flow exists.
\end{proposition}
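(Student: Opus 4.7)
The plan is to derive from the preceding preservation result a uniform positive lower bound on the Jacobian $u$ of the projection $\pi_M\circ F_t$, and then invoke continuity to rule out $T_g<T$. The singular value decomposition (\ref{tangent}) shows that the projection $\pi_M$ sends the orthonormal basis $\{e_i\}$ of $\dF(T_xM)$ to vectors of length $(1+\lambda_i^2)^{-1/2}$, so that
\[
u=\prod_{i=1}^{m}\frac{1}{\sqrt{1+\lambda_i^{2}}}.
\]
Consequently, showing that $F_t$ remains a graph is equivalent to establishing a uniform upper bound on every singular value $\lambda_i(\,\cdot\,,t)$ on $[0,T_g)$.

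Since $M$ is compact and $f_0$ is strictly area decreasing, the smallest eigenvalue of $\sind^{[2]}$ with respect to $\Gind$ admits a strictly positive minimum $\rho_0>0$ at $t=0$. The proof of the preceding lemma in fact establishes the quantitative tensor inequality $\sind^{[2]}\ge\rho_0\,\Gind$ throughout $[0,T_g)$ (it suffices to let $\varepsilon\to 0$ in the definition of $M_\varepsilon$), and by the eigenvalue lemma at the end of Section~\ref{sec 2} this is precisely the pointwise statement $\mu_1+\mu_2\ge\rho_0$. Rewriting this in terms of singular values,
\[
\frac{2(1-\lambda_{m-1}^{2}\lambda_m^{2})}{(1+\lambda_{m-1}^{2})(1+\lambda_m^{2})}\ge\rho_0,
\]
a short algebraic manipulation using only $\lambda_{m-1}^{2}\ge 0$ yields the uniform bound $\lambda_m^{2}\le(2-\rho_0)/\rho_0$; since $\lambda_i\le\lambda_m$ for every $i$, all singular values are uniformly controlled, and therefore $u\ge(\rho_0/2)^{m/2}>0$ on $M\times[0,T_g)$.

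To finish I would argue by contradiction: if $T_g<T$, then $F$ extends smoothly to $[0,T_g]$ and the preceding uniform bound forces $u(\,\cdot\,,T_g)\ge(\rho_0/2)^{m/2}>0$. By the implicit map theorem---exactly the criterion recalled right after the definition of $u$---the flow then remains a graph on a neighbourhood $[T_g,T_g+\delta)$, contradicting the maximality of $T_g$. Hence $T_g=T$, as claimed. The only nontrivial ingredient is the tensorial preservation of the strict area decreasing property, which is the content of the preceding lemma; everything else is an algebraic translation between $\sind^{[2]}$ and the singular values together with a standard continuation argument.
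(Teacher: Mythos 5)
Your proposal is correct and follows essentially the same route as the paper: both arguments convert the preserved $2$-positivity of $\sind$ (your explicit $\sind^{[2]}\ge\rho_0\,\Gind$; the paper's universal $\varepsilon>0$) into a uniform upper bound on all singular values, then use $u=\prod_i(1+\lambda_i^2)^{-1/2}$ together with continuity to conclude $u$ stays bounded away from zero, contradicting $T_g<T$. The only cosmetic difference is that you make the constant $\rho_0$ and the limiting step $\varepsilon\to 0$ in $M_\varepsilon$ explicit, and you phrase the endgame as a continuation-past-$T_g$ argument rather than directly contradicting the non-graphicality of $F_{T_g}$; both are equivalent.
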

\begin{proof}
As we mentioned before, there exists a time $T_g$ such that $F_t$ is graphical for any $t$ in the
interval $[0,T_g)$. We claim that $T_g=T$. Arguing indirectly, let us assume that $T_g<T$ and that
$F_{T_g}$ is not graphical. Since $\sind$ stays positive in time, there exists a positive universal
constant $\varepsilon$ such that
$$\frac{1-\lambda^2_{i}\lambda^2_{j}}{(1+\lambda^2_{i})(1+\lambda^{2}_{j})}\ge\varepsilon>0,$$
for any $1\le i<j\le m.$ In particular, we have
$$\varepsilon(1+\lambda^2_i)\le\frac{1-\lambda^2_i\lambda^2_j}{1+\lambda^2_j}\le 1,$$
for any $1\le i<j\le m.$  From the above inequality we can see that
$$1+\lambda^{2}_{i}\le\frac{1}{\varepsilon},$$
for every index $1\le i\le m.$ Hence under the curvature conditions of Theorem \ref{thmA}, the
singular values of $\df_{t}$ are bounded by a time-independent universal constant for every
$t\in[0,T_g)$. For any fixed arbitrary $x\in M$, the continuity of $u$ implies that
\begin{eqnarray*}
u(x,T_g)&=&\lim_{t\nearrow T_g}u(x,t) \\
&=&\lim_{t\nearrow T_g}\frac{1}{\sqrt{(1+\lambda^2_{1}(x,t))\cdots(1+\lambda^2_{m}(x,t))}}\\
&>&0.
\end{eqnarray*}
Thus,
$$u(x,T_g)>0$$
for any $x\in M$. This implies that the map $F_{T_g}$ must be graphical,
contradicting our initial assumption on $T_g$. Therefore $T_g=T$. This completes the proof.
\end{proof}

In the next lemma we give the evolution equation of the function $u$. The proof can be found in \cite{wang}
and for this reason will be omitted.

\begin{lemma}\label{lem evol}
The function $\log u$ evolves under the mean curvature flow for $t\in[0,T)$ according to
\begin{eqnarray*}
\nabla\hspace{-2pt}_{\dt}\hspace{-2pt}\log u&-&\Delta\log u=\|A\|^2 +\sum^{m}_{k=1}\sum^{r}_{i=1}\lambda_{m-r+i}^{2}A_{\xi_{n-r+i}}^2(e_i,e_k)\\
&+&2\sum_{1\le k\le m}\sum_{1\le i<j\le r}\lambda_{m-r+i}\lambda_{m-r+j}A_{\xi_{n-r+j}}(e_i,e_k)A_{\xi_{n-r+i}}(e_j,e_k)\\
&+&\sum_{l,k=1}^{m}\bigr(\lambda^2_{l}\rm-f^{\ast}\rn\bigl)(e_l,e_k,e_l,e_k),
\end{eqnarray*}
where here $\{e_1,\dots,e_m\}$ and  $\{\xi_1,\dots,\xi_n\}$ are the special bases defined in Subsection $2.3$.
\end{lemma}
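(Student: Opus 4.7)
The decisive structural input is that $\omega := \pi_{M}^{*}\Omega_{M}$ is a \emph{parallel} $m$-form on $(M\times N,\gk)$, because the product splitting $T(M\times N) = TM\oplus TN$ is $\nabla^{\gk}$-parallel and $\Omega_{M}$ is parallel on $(M,\gm)$. Since $u$ is the Jacobian of $\pi_{M}\circ F_{t}$, one has $u = \ast F_{t}^{*}\omega$, so the plan is to compute $\nabla_{\dt}u - \Delta u$ pointwise and then divide by $u$. At each spacetime point I would work in the singular-value-adapted frames $\{e_{i}\}$, $\{\xi_{\alpha}\}$ of Subsection~2.3, where $u = \prod_{i=1}^{m}(1+\lambda_{i}^{2})^{-1/2}$ is explicit and $\omega$ obeys a simple substitution rule read off directly from the definitions of $e_{i}$ and $\xi_{\alpha}$: replacing $e_{k}$ in the tuple $(e_{1},\dots,e_{m})$ by a normal $\xi_{\alpha}$ kills $\omega$ unless $(k,\alpha) = (m-r+l,n-r+l)$ for some $l$, in which case one picks up a factor $-\lambda_{m-r+l}$ relative to $u$; a double substitution at distinct positions produces the corresponding antisymmetrised product of two such factors.

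\textbf{Core computation.} Using $\nabla^{\gk}\omega = 0$ together with the Gauss decomposition $\nabla^{\gk}_{e_{j}}e_{k} = \nabla_{e_{j}}e_{k} + A(e_{j},e_{k})$, the tangential Christoffel contributions drop out by antisymmetry of $\omega$, yielding
\[
\nabla_{i}u \;=\; \sum_{k,\alpha} A_{\xi_{\alpha}}(e_{i},e_{k})\,\omega(e_{1},\dots,\xi_{\alpha}\text{ at pos.\ }k,\dots,e_{m}).
\]
Differentiating once more gives $\Delta u$ as a sum of three pieces: a $\nabla^{\perp}A$ piece that the Codazzi equation converts into $\nabla^{\perp}H$ plus an $\rk$-correction; a quadratic $A\otimes A$ piece from double substitutions in $\omega$; and an ambient-curvature piece coming from the Ricci identity. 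The time derivative $\nabla_{\dt}u$, computed from $\dt F = H$, produces precisely the contribution that cancels the $\nabla^{\perp}H$ piece, so that
\[
\nabla_{\dt}u - \Delta u \;=\; u\,\|A\|^{2} + (\text{off-diagonal } A\otimes A \text{ cross terms}) + (\text{ambient curvature terms}).
\]

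\textbf{Specialisation and passage to $\log u$.} Evaluating the $A\otimes A$ substitutions in the singular-value basis produces the $\lambda_{m-r+i}^{2}A_{\xi_{n-r+i}}^{2}$ diagonal summand, while the off-diagonal substitutions produce the cross summand with the characteristic \emph{swapped} subscript-argument pattern $A_{\xi_{n-r+j}}(\cdot)A_{\xi_{n-r+i}}(\cdot)$ that is forced by the alternating sign of $\omega$. The curvature term splits via $\rk = \pi_{M}^{*}\rm \oplus \pi_{N}^{*}\rn$; contracting with the $M$- and $N$-projections of $\dF(e_{l}) = (\alpha_{l} + \lambda_{l}\beta_{\star})/\sqrt{1+\lambda_{l}^{2}}$ yields the weighted sum $\sum_{l,k}(\lambda_{l}^{2}\rm - f^{*}\rn)(e_{l},e_{k},e_{l},e_{k})$. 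Finally, from $\nabla_{\dt}\log u - \Delta\log u = (\nabla_{\dt}u - \Delta u)/u + |\nabla u|^{2}/u^{2}$, the remainder $|\nabla u|^{2}/u^{2}$ --- which by the gradient formula above is a quadratic in the quantities $\lambda_{m-r+k}A_{\xi_{n-r+k}}$ --- combines with the diagonal $A\otimes A$ contribution to produce exactly the coefficients displayed in the statement.

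\textbf{Main obstacle.} The main difficulty is purely combinatorial: carefully bookkeeping the alternating signs of $\omega$ across all relevant substitutions in the singular-value basis, and checking that the $|\nabla u|^{2}/u^{2}$ remainder reassembles cleanly so that the off-diagonal cross term emerges with the swapped pattern $A_{\xi_{n-r+j}}(e_{i},e_{k})A_{\xi_{n-r+i}}(e_{j},e_{k})$ rather than the naive arrangement $A_{\xi_{n-r+i}}A_{\xi_{n-r+j}}$. Everything else --- the $\|A\|^{2}$ diagonal, the ambient-curvature split, and the $\nabla^{\perp}H$ cancellation --- follows essentially by the general Bochner-type pattern for parallel-form angle functions.
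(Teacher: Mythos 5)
The paper does not actually prove this lemma --- it explicitly omits the argument and cites Wang's paper \cite{wang} --- and your sketch is precisely the parallel-form Bochner computation of that reference: $\pi_M^*\Omega_M$ is $\nabla^{\gk}$-parallel, $\nabla u$ arises from single substitutions of normal vectors into $\omega$, $\Delta u$ from double substitutions plus Codazzi/Ricci-identity curvature terms, the $\nabla^{\perp}H$ piece cancels against $\nabla_{\dt}u$ via $\dt F=H$, and the passage to $\log u$ contributes $|\nabla u|^{2}/u^{2}$, which supplies the diagonal quadratic (note that the double-substitution terms in $\Delta u$ are by construction purely off-diagonal, since a form slot can only be replaced once). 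So your approach coincides with the one the paper invokes.
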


\begin{lemma}\label{lem A}
Under the assumptions made in Theorem \ref{thmA}, for any $t\in[0,T)$ there exists
a positive number $\delta$ such that
\begin{eqnarray*}
\mathcal{A}:&=&\|A\|^2 +\sum^{m}_{k=1}\sum^{r}_{i=1}\lambda_{m-r+i}^{2}A_{\xi_{n-r+i}}^2(e_i,e_k)\\
&&+2\sum_{1\le k\le m}\sum_{1\le i<j\le r}\lambda_{m-r+i}\lambda_{m-r+j}A_{\xi_{n-r+j}}(e_i,e_k)A_{\xi_{n-r+i}}(e_j,e_k)\\
&\ge&\delta\|A\|^2.
\end{eqnarray*}
\end{lemma}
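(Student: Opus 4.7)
The plan is to split $\mathcal{A}$ into a manifestly non-negative diagonal part and a sign-indefinite cross term, and then to absorb the cross term into $\|A\|^2$ by AM--GM combined with the strict area-decreasing bound on the singular values.

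Fix $t\in[0,T)$. By the preceding lemma $f_t$ is strictly area-decreasing, so at every $x\in M$ the singular values satisfy $\lambda_i(x,t)\lambda_j(x,t)<1$ for all $1\le i<j\le m$. Since $M$ is compact and the singular values are continuous functions of $x$, I would extract $\varepsilon=\varepsilon(t)>0$ such that
$$\lambda_{m-r+i}(x,t)\,\lambda_{m-r+j}(x,t)\le 1-\varepsilon\qquad\text{for all }x\in M,\ 1\le i<j\le r.$$

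Next I would write $\mathcal{A}=\|A\|^2+\mathcal{B}+\mathcal{C}$, where
$$\mathcal{B}:=\sum_{k=1}^m\sum_{i=1}^r\lambda_{m-r+i}^2\,A_{\xi_{n-r+i}}^2(e_i,e_k)\ge 0$$
is the diagonal part and $\mathcal{C}$ denotes the cross term appearing in the statement. Applying the elementary inequality $2|ab|\le a^2+b^2$ to each summand of $\mathcal{C}$, with $a=A_{\xi_{n-r+j}}(e_i,e_k)$ and $b=A_{\xi_{n-r+i}}(e_j,e_k)$, and pulling out the factor $\lambda_{m-r+i}\lambda_{m-r+j}$, one obtains
$$|\mathcal{C}|\le\sum_{k=1}^m\sum_{1\le i<j\le r}\lambda_{m-r+i}\lambda_{m-r+j}\bigl(A_{\xi_{n-r+j}}^2(e_i,e_k)+A_{\xi_{n-r+i}}^2(e_j,e_k)\bigr).$$
Inserting $\lambda_{m-r+i}\lambda_{m-r+j}\le 1-\varepsilon$ and re-indexing the second summand then gives
$$|\mathcal{C}|\le(1-\varepsilon)\sum_{k=1}^m\sum_{\substack{1\le i,j\le r\\ i\ne j}}A_{\xi_{n-r+j}}^2(e_i,e_k)\le(1-\varepsilon)\,\|A\|^2,$$
the last step because the displayed double sum is a sub-sum of $\|A\|^2$.

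Combining the estimates yields
$$\mathcal{A}\ge\|A\|^2+\mathcal{B}-|\mathcal{C}|\ge\varepsilon\,\|A\|^2,$$
so $\delta:=\varepsilon$ does the job. The only delicate point, and the natural place a potential obstruction might arise, is the uniform choice of $\varepsilon$ over $M$; this is immediate from compactness of $M$ together with continuity of the singular values, in fact quantified exactly as in the preservation lemma. Beyond that the argument is purely algebraic, so I do not anticipate any serious obstacle.
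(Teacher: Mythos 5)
Your proof is correct and follows essentially the same path as the paper: the uniform bound $\lambda_i\lambda_j\le 1-\varepsilon$ from preservation of the area-decreasing property, followed by the AM--GM estimate $2|ab|\le a^2+b^2$ that absorbs the cross term into a sub-sum of $\|A\|^2$ (the paper phrases this identically, writing the completed square $\bigl(|A_{\xi_{n-r+j}}(e_i,e_k)|-|A_{\xi_{n-r+i}}(e_j,e_k)|\bigr)^2\ge 0$ after splitting $\|A\|^2=\delta\|A\|^2+(1-\delta)\|A\|^2$). The only thing to tighten is your $\varepsilon=\varepsilon(t)$: the preservation lemma actually yields the time-independent constant $\rho_0$, hence a time-\emph{uniform} $\varepsilon$, which is what gets used when Lemma \ref{logu} is fed into the long-time existence argument.
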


\begin{proof}
Because the strictly area decreasing property is preserved under the mean curvature flow, there exists a positive number
$\delta$ such that
$$\lambda_{i}\lambda_{j}\le 1-\delta,$$
for any $t\in [0,T)$ and $1\le i<j\le r$. Thus, for any $1\le k\le m$, we obtain that
\begin{eqnarray*}
&&\sum_{1\le i<j\le r}|\lambda_{m-r+i}\lambda_{n-r+j}A_{\xi_{n-r+j}}(e_i,e_k)A_{\xi_{n-r+i}}(e_j,e_k)|\\
&&\quad\quad\quad\quad\le (1-\delta)\sum_{1\le i<j\le r}|A_{\xi_{n-r+j}}(e_i,e_k)A_{\xi_{n-r+i}}(e_j,e_k)|.
\end{eqnarray*}
Therefore,
\begin{eqnarray*}
\mathcal{A}&\ge&\delta\|A\|^2+(1-\delta)\|A\|^2 \\
&&-2(1-\delta)\sum_{1\le k\le m}\sum_{1\le i<j\le r}|A_{\xi_{n-r+j}}(e_i,e_k)A_{\xi_{n-r+i}}(e_j,e_k)| \\
&\ge&\delta\|A\|^2+(1-\delta)\sum^{m}_{k=1}\sum_{i,j=1}^{r}A^2_{\xi_{n-r+i}}(e_j,e_k) \\
&&-2(1-\delta)\sum_{1\le k\le m}\sum_{1\le i<j\le r}|A_{\xi_{n-r+j}}(e_i,e_k)A_{\xi_{n-r+i}}(e_j,e_k)| \\
&\ge&\delta\|A\|^2\\
&&+(1-\delta)\sum_{1\le k\le m}\sum_{1\le i<j\le r}\Bigl(|A_{\xi_{n-r+j}}(e_i,e_k)|-|A_{\xi_{n-r+i}}(e_j,e_k)|\Bigr)^2\\
&\ge&\delta\|A\|^2.
\end{eqnarray*}
This completes the proof.
\end{proof}

The next estimate will be crucial in the proof of Theorem \ref{thmA}. This estimate exploits a decomposition
formula for the curvature components that we obtained in \cite{savas}.
It makes it possible to relax the
curvature assumptions used in the paper by Lee and Lee \cite{lee} to those stated
in our main theorem.
\begin{lemma}\label{lem B}
Under the assumptions made in Theorem \ref{thmA} we have
$$\mathcal{B}:=\sum_{l,k=1}^{m}\bigr(\lambda^2_{l}\rm-f^{\ast}\rn\bigl)(e_l,e_k,e_l,e_k)\ge 0.$$
\end{lemma}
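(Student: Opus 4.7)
The plan is to first turn $\mathcal{B}$ into a concrete expression in the singular values and the sectional curvatures, using the bases from Subsection 2.3. Since the orthonormal graph-frame $\{e_l\}$ corresponds under $dF^{-1}$ to $\tilde e_l = \alpha_l/\sqrt{1+\lambda_l^2}$ (with the convention $\lambda_l=0$ for $l\leq m-r$), writing
$$c_l := \frac{\lambda_l^2}{1+\lambda_l^2}, \qquad b_l := \frac{1}{1+\lambda_l^2},$$
and $\kappa_{lk}:=\sec_M(\alpha_l,\alpha_k)$, $\tau_{lk}:=\sec_N(\beta_{n-m+l},\beta_{n-m+k})$ (taken to be $0$ when the $\beta$'s are undefined), a direct substitution gives
$$\mathcal{B}=\sum_{l,k}c_l b_k\kappa_{lk}-\sum_{l,k}c_l c_k\tau_{lk}.$$

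Using $b_k=1-c_k$ together with $\sum_k\kappa_{lk}=\operatorname{Ric}_M(\alpha_l)$, I would rewrite this in the mixed form
$$\mathcal{B}=\sum_l c_l\operatorname{Ric}_M(\alpha_l)-\sum_{l,k}c_l c_k(\kappa_{lk}+\tau_{lk}).$$
Now I would invoke the curvature hypotheses: $\tau_{lk}\leq\sigma$ (which follows from $(m-1)\sigma\geq(m-1)\sec_N$) and $\tilde\eta_{lk}:=\kappa_{lk}+\sigma>0$ (from $\sec_M>-\sigma$). Substituting $\kappa_{lk}=-\sigma+\tilde\eta_{lk}$ for $l\neq k$, expanding, and cancelling the common terms proportional to $\sigma(S^2-\sum c_l^2)$, where $S:=\sum_l c_l$, I expect to arrive after standard symmetrization at the cleaner estimate
$$\mathcal{B}\geq -(m-1)\sigma S+\sum_{l<k}\Lambda_{lk}\,\tilde\eta_{lk},\qquad \Lambda_{lk}:=c_l b_k+c_k b_l.$$

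The heart of the argument, and the place I expect to be delicate, is the pointwise comparison
$$\Lambda_{lk}\geq \tfrac{1}{2}(c_l+c_k).$$
This is where the strict area-decreasing condition, preserved under the flow by the previous lemma, enters crucially. Indeed, $2\Lambda_{lk}-(c_l+c_k)=c_l\mu_k+c_k\mu_l$ with $\mu_j=(1-\lambda_j^2)/(1+\lambda_j^2)$, whose numerator reduces to $\lambda_l^2+\lambda_k^2-2\lambda_l^2\lambda_k^2$; by AM-GM this is bounded below by $2\lambda_l\lambda_k(1-\lambda_l\lambda_k)$, which is non-negative precisely because $\lambda_l\lambda_k\leq 1$. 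This is the step where the Ricci lower bound, the upper bound on $\sec_N$, and the area-decreasing property must conspire together.

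To finish, I would combine this inequality with the Ricci bound written in the form $\sum_{k\neq l}\tilde\eta_{lk}=\operatorname{Ric}_M(\alpha_l)+(m-1)\sigma\geq 2(m-1)\sigma$. Weighting by $c_l$, summing, and using the symmetry $\tilde\eta_{lk}=\tilde\eta_{kl}$ produces
$$\sum_{l<k}(c_l+c_k)\tilde\eta_{lk}=\sum_l c_l\sum_{k\neq l}\tilde\eta_{lk}\geq 2(m-1)\sigma S,$$
and therefore $\sum_{l<k}\Lambda_{lk}\tilde\eta_{lk}\geq(m-1)\sigma S$. Plugging back yields $\mathcal{B}\geq 0$, as desired.
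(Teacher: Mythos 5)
Your proof is correct and is essentially the same argument as the paper's: when your chain of inequalities is rewritten as an exact identity, it reproduces precisely the paper's decomposition of $2\mathcal{B}$ into the three manifestly nonnegative pieces weighted by $\sigma-\sec_N$, $\operatorname{Ric}_M-(m-1)\sigma$, and $\sec_M+\sigma$, and your AM--GM step $\lambda_l^2+\lambda_k^2-2\lambda_l^2\lambda_k^2\geq 2\lambda_l\lambda_k(1-\lambda_l\lambda_k)$ is exactly the paper's observation that this quantity equals $(\lambda_l-\lambda_k)^2+2\lambda_l\lambda_k(1-\lambda_l\lambda_k)\geq 0$. (One small aside: the Ricci bound does not enter the pointwise comparison $\Lambda_{lk}\geq\tfrac12(c_l+c_k)$ as you suggest; only the area-decreasing condition does, with Ricci entering afterwards in the weighted sum.)
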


\begin{proof}
In terms of the singular values, we get
$$\sind(e_k,e_k)=\gm(e_k,e_k)-f^*\gn(e_k,e_k)=\frac{1-\lambda_k^2}{1+\lambda_k^2}.$$
Since
$$1=\gind(e_k,e_k)=\gm(e_k,e_k)+f^*\gn(e_k,e_k)$$
we derive
$$\gm(e_k,e_k)=\frac{1}{1+\lambda_k^2},\quad f^*\gn(e_k,e_k)=\frac{\lambda_k^2}{1+\lambda_k^2}$$
and
$$2\gm(e_k,e_k)=\frac{1-\lambda_k^2}{1+\lambda_k^2}+1,\quad -2f^*\gn(e_k,e_k)=\frac{1-\lambda_k^2}{1+\lambda_k^2}-1.$$
Note also that for any $k \neq l$ we have
$$\gm(e_k,e_l)=f^*\gn(e_k,e_l)=\gind(e_k,e_l)=0.$$
We compute
\begin{eqnarray*}
\mathcal{B}(l):&=&2\sum_{k=1}^m\Bigl(\lambda^2_l\rm(e_k,e_l,e_k,e_l)-f^*\rn(e_k,e_l,e_k,e_l)\Bigr) \\
&=&2\sum^m_{k=1,\,k\neq l}\lambda^2_l\sec_M(e_k\wedge e_l)\gm(e_k,e_k)\gm(e_l,e_l)\\
&&-2\sum^m_{k=1,\,k\neq l}\sigma_N(\df(e_k)\wedge \df(e_l))f^*\gn(e_k,e_k)f^*\gn(e_l,e_l).
\end{eqnarray*}
Here the terms $\sigma_N(\df(e_k)\wedge \df(e_l))$ are zero if $\df(e_k)$, $\df(e_l)$ are collinear and otherwise they denote the
sectional curvatures on $(N,\gn)$ of the planes $\df(e_k)\wedge\df (e_l)$. Now the formula for $\gm(e_k,e_k)$ implies
\begin{eqnarray*}
\mathcal{B}(l)&=&\sum^m_{k=1,\,k\neq l}\left(1+\frac{1-\lambda_k^2}{1+\lambda_k^2}\right)\lambda^2_l\sec_M(e_k\wedge e_l)\gm(e_l,e_l)\\
&&+2\hspace{-8pt}\sum^m_{k=1,\,k\neq l}
\hspace{-9pt}f^*\gn(e_k,e_k)\Bigl\{\bigl(\lambda^2_l\sigma-\sigma_N(\df(e_k)\wedge\df(e_l))\bigr)
f^*\gn(e_l,e_l)\\
&&\hspace{4cm}+\lambda^2_l\sigma\bigl(\gm(e_l,e_l)-f^*\gn(e_l,e_l)\bigr)\Bigr\}\\
&&-2\lambda^2_l\sigma\sum_{k\neq l}f^*\gn(e_k,e_k)\gm(e_l,e_l)\\
&=&\sum^m_{k=1,\,k\neq l}\left(1+\frac{1-\lambda_k^2}{1+\lambda_k^2}\right)\lambda^2_l\sec_M(e_k\wedge e_l)\gm(e_l,e_l)\\
&&+2\sum^m_{k=1,\,k\neq l}
\lambda^2_lf^*\gn(e_k,e_k)\gm(e_l,e_l)\Bigl(\sigma-\sigma_N(\df(e_k)\wedge\df(e_l))\Bigr) \\
&&+\lambda^2_l\sigma\sum^m_{k=1,\,k\neq l}\left(\frac{1-\lambda_k^2}{1+\lambda_k^2}-1\right)\gm(e_l,e_l).
\end{eqnarray*}
We may then continue to get
\begin{eqnarray*}
\mathcal{B}(l)&=&2\sum^m_{k=1,\,k\neq l}
\lambda^2_lf^*\gn(e_k,e_k)\gm(e_l,e_l)\Bigl(\sigma-\sigma_N(\df(e_k)\wedge\df(e_l))\Bigr)\\
&&+\lambda^2_l\Bigl(\operatorname{Ric}_M(e_l,e_l)-(m-1)\sigma\,\gm(e_l,e_l)\Bigr)\\
&&+\sum^m_{k=1,\,k\neq l}\frac{\lambda^2_l}{1+\lambda^2_l}\frac{1-\lambda_k^2}{1+\lambda_k^2}
\Bigl(\sigma_M(e_k\wedge e_l)+\sigma\Bigr).
\end{eqnarray*}
Now we can see that
\begin{eqnarray*}
2\mathcal{B}&=&\sum^m_{l=1}\mathcal{B}(l) \\
&=&2\sum^m_{l,k=1,\,k\neq l}
\lambda^2_lf^*\gn(e_k,e_k)\gm(e_l,e_l)\Bigl(\sigma-\sigma_N(\df(e_k)\wedge\df(e_l))\Bigr)\\
&&+\sum^m_{l=1}\lambda^2_l\Bigl(\operatorname{Ric}_M(e_l,e_l)-(m-1)\sigma\,\gm(e_l,e_l)\Bigr)\\
&&+\sum_{1\le k<l \le m}\frac{(\lambda_k-\lambda_l)^2+2\lambda_l\lambda_k(1-\lambda_l\lambda_k)}
{(1+\lambda^2_k)(1+\lambda_l^2)}\Bigl(\sec_M(e_k\wedge e_l)+\sigma\Bigr).
\end{eqnarray*}
This completes the proof.
\end{proof}

\begin{lemma}\label{logu}
Under the assumptions of Theorem \ref{thmA}, the function $u$ satisfies the differential
inequality
$$\nabla_{\dt}\hspace{-2pt}\log u\ge\Delta\log u+\delta\|A\|^2,$$
for some positive real number $\delta.$
\end{lemma}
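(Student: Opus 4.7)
The strategy is to read off the right-hand side of the evolution equation for $\log u$ from Lemma \ref{lem evol} and split it into two natural blocks: the second-fundamental-form block
\[
\mathcal{A}=\|A\|^{2}+\sum_{k=1}^{m}\sum_{i=1}^{r}\lambda_{m-r+i}^{2}A_{\xi_{n-r+i}}^{2}(e_{i},e_{k})+2\!\!\sum_{1\le k\le m}\sum_{1\le i<j\le r}\!\!\lambda_{m-r+i}\lambda_{m-r+j}A_{\xi_{n-r+j}}(e_{i},e_{k})A_{\xi_{n-r+i}}(e_{j},e_{k}),
\]
and the ambient-curvature block
\[
\mathcal{B}=\sum_{l,k=1}^{m}\bigl(\lambda_{l}^{2}\rm-f^{\ast}\rn\bigr)(e_{l},e_{k},e_{l},e_{k}).
\]
With this notation Lemma \ref{lem evol} reads $\nabla_{\dt}\log u-\Delta\log u=\mathcal{A}+\mathcal{B}$, so the claim reduces to producing a single positive constant $\delta$ for which $\mathcal{A}+\mathcal{B}\ge\delta\|A\|^{2}$.

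For the block $\mathcal{A}$ I would invoke Lemma \ref{lem A} directly: because the strictly area-decreasing property is preserved along the flow, there is a uniform $\delta>0$ with $\lambda_{i}\lambda_{j}\le 1-\delta$ for all $1\le i<j\le r$ and all $t\in[0,T)$, and this is exactly what is needed to complete the square in the cross terms and to bound $\mathcal{A}\ge\delta\|A\|^{2}$. For the block $\mathcal{B}$ I would apply Lemma \ref{lem B}, whose decomposition formula reorganizes the terms into three manifestly non-negative pieces under the hypotheses $\sec_{M}>-\sigma$, $\Ric_{M}\ge(m-1)\sigma$ and $(m-1)\sigma\ge(m-1)\sec_{N}$ of Theorem \ref{thmA}, giving $\mathcal{B}\ge 0$.

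Adding these two estimates yields
\[
\nabla_{\dt}\log u-\Delta\log u=\mathcal{A}+\mathcal{B}\ge\delta\|A\|^{2}+0=\delta\|A\|^{2},
\]
which is the desired inequality. The only subtle point is to make sure the constant $\delta$ chosen in Lemma \ref{lem A} is genuinely time-independent, but this is exactly what the preservation of the strictly area-decreasing property guarantees; thus no additional work beyond citing the earlier lemmas is required, and the proof is essentially a one-line combination of Lemmas \ref{lem evol}, \ref{lem A} and \ref{lem B}.
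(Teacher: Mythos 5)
Your proof is correct and follows essentially the same route as the paper: the paper's own proof simply combines Lemma \ref{lem evol} (the evolution equation) with Lemma \ref{lem A} (giving $\mathcal{A}\ge\delta\|A\|^{2}$) and Lemma \ref{lem B} (giving $\mathcal{B}\ge 0$), exactly as you do. Your additional remark that the time-independence of $\delta$ rests on preservation of the strictly area-decreasing property is also the point the paper relies on implicitly.
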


\begin{proof}
The proof follows by combining the estimates obtained in Lemma \ref{lem A} and \ref{lem B} with
the evolution equation of $\log u$ in Lemma \ref{lem evol}.
\end{proof}

\section{Proof of Theorem A}
Since the area decreasing property is preserved, the solution $F$ stays in particular
graphical for any $t\in[0,T)$. Recall that from Lemma \ref{logu}, we obtain the estimate
$$\nabla_{\dt}\hspace{-2pt}\log u\ge\Delta\log u+\delta\|A\|^2$$
for some positive number $\delta>0$. Once this estimate is available, we may proceed exactly
as in the paper by Wang and Tsui \cite{tsui} to exclude finite time singularities. Note, that
this step requires $N$ to be compact since both Nash's embedding theorem \cite{nash} and White's
regularity theorem \cite{white} for the mean curvature flow with controlled error terms (by the
compactness of $M\times N$) are applied to the mean curvature flow of
$$F(M)\subset M\times N\overset{\text{Nash}}{\subset}\real{p}.$$
Following the same arguments developed in the papers \cite[Section 4]{wang} or
\cite[Section 3]{lee}, we can prove the long-time existence and the convergence of
the mean curvature flow to a constant map.

\begin{bibdiv}
\begin{biblist}

\bib{chau-chen-he1}{article}{
   author={Chau, A.},
   author={Chen, J.},
   author={He, W.},
   title={Lagrangian mean curvature flow for entire Lipschitz graphs},
   journal={Calc. Var. Partial Differential Equations},
   volume={44},
   date={2012},
   number={1-2},
   pages={199--220},
}

\bib{chau-chen-he2}{article}{
   author={Chau, A.},
   author={Chen, J.},
   author={Yuan, Y.},
   title={Lagrangian mean curvature flow for entire Lipschitz graphs II},
   journal={Math. Ann.},
   volume={online first},
   date={2013},
}

\bib{chen-li-tian}{article}{
   author={Chen, J.},
   author={Li, J.},
   author={Tian, G.},
   title={Two-dimensional graphs moving by mean curvature flow},
   journal={Acta Math. Sin. (Engl. Ser.)},
   volume={18},
   date={2002},
   number={2},
   pages={209--224},
}

\bib{eells}{article}{
   author={Eells, J.},
   author={Sampson, J.H.},
   title={Harmonic mappings of Riemannian manifolds},
   journal={Amer. J. Math.},
   volume={86},
   date={1964},
   pages={109--160},
}

\bib{ecker}{article}{
   author={Ecker, K.},
   author={Huisken, G.},
   title={Mean curvature evolution of entire graphs},
   journal={Ann. of Math. (2)},
   volume={130},
   date={1989},
   number={3},
   pages={453--471},
}

\bib{guth}{article}{
   author={Guth, L.},
   title={Homotopy non-trivial maps with small $k$-dilation},
   journal={arXiv:0709.1241v1},
   date={2007},
   pages={1--7},
}

\bib{hamilton2}{article}{
   author={Hamilton, R.},
   title={Three-manifolds with positive Ricci curvature},
   journal={J. Differential Geom.},
   volume={17},
   date={1982},
   number={2},
   pages={255--306},
}

\bib{lee}{article}{
   author={Lee, K.-W.},
   author={Lee, Y.-I.},
   title={Mean curvature flow of the graphs of maps between compact
   manifolds},
   journal={Trans. Amer. Math. Soc.},
   volume={363},
   date={2011},
   number={11},
   pages={5745--5759},
}

\bib{medos}{article}{
   author={Medo{\v{s}}, I.},
   author={Wang, M.-T.},
   title={Deforming symplectomorphisms of complex projective spaces by the
   mean curvature flow},
   journal={J. Differential Geom.},
   volume={87},
   date={2011},
   number={2},
   pages={309--341},
}

\bib{nash}{article}{
   author={Nash, J.},
   title={The imbedding problem for Riemannian manifolds},
   journal={Ann. of Math. (2)},
   volume={63},
   date={1956},
   pages={20--63},
}

\bib{savas}{article}{
   author={Savas-Halilaj, A.},
   author={Smoczyk, K.},
   title={The strong elliptic maximum principle for vector bundles and applications to minimal maps},
   journal={arXiv:1205.2379v1},
   date={2012},
   pages={1--39},
}

\bib{smoczyk1}{article}{
   author={Smoczyk, K.},
   title={Mean curvature flow in higher codimension-Introduction and survey},
   journal={Global Differential Geometry,  Springer Proceedings in Mathematics},
   volume={12},
   date={2012},
   pages={231--274},
}

\bib{smoczyk2}{article}{
   author={Smoczyk, K.},
   title={Longtime existence of the Lagrangian mean curvature flow},
   journal={Calc. Var. Partial Differential Equations},
   volume={20},
   date={2004},
   number={1},
   pages={25--46},
}

\bib{smoczyk3}{article}{
   author={Smoczyk, K.},
   title={Angle theorems for the Lagrangian mean curvature flow},
   journal={Math. Z.},
   volume={240},
   date={2002},
   number={4},
   pages={849--883},
}

\bib{smoczyk4}{article}{
   author={Smoczyk, K.},
   author={Wang, M.-T.},
   title={Mean curvature flows of Lagrangians submanifolds with convex
   potentials},
   journal={J. Differential Geom.},
   volume={62},
   date={2002},
   number={2},
   pages={243--257},
}

\bib{tsui}{article}{
   author={Tsui, M.-P.},
   author={Wang, M.-T.},
   title={Mean curvature flows and isotopy of maps between spheres},
   journal={Comm. Pure Appl. Math.},
   volume={57},
   date={2004},
   number={8},
   pages={1110--1126},
}

\bib{wang}{article}{
   author={Wang, M.-T.},
   title={Long-time existence and convergence of graphic mean curvature flow
   in arbitrary codimension},
   journal={Invent. Math.},
   volume={148},
   date={2002},
   number={3},
   pages={525--543},
}

\bib{wang1}{article}{
   author={Wang, M.-T.},
   title={Mean curvature flow of surfaces in Einstein four-manifolds},
   journal={J. Differential Geom.},
   volume={57},
   date={2001},
   number={2},
   pages={301--338},
}

\bib{wang2}{article}{
   author={Wang, M.-T.},
   title={Deforming area preserving diffeomorphism of surfaces by mean
   curvature flow},
   journal={Math. Res. Lett.},
   volume={8},
   date={2001},
   number={5-6},
   pages={651--661},
}

\bib{white}{article}{
   author={White, B.},
   title={A local regularity theorem for mean curvature flow},
   journal={Ann. of Math. (2)},
   volume={161},
   date={2005},
   number={3},
   pages={1487--1519},
}

\end{biblist}
\end{bibdiv}

\end{document}